\newcommand{\Fq}{\mathbb{F}_q}
\newcommand{\Z}{\mathbb{Z}}
\newtheorem{proposition}{Proposition}
\newtheorem{lemma}{Lemma}
\newtheorem{algorithm}{Algorithm}
\theoremstyle{definition}
\newtheorem{example}{Example}
\theoremstyle{remark}
\begin{document}

\title[Constructing elliptic curves with prescribed torsion]{Constructing elliptic curves over finite fields with prescribed torsion}
\author{Andrew V. Sutherland}
\address{Department of Mathematics, Massachusetts Institute of Technology, Cambridge, Massachusetts 02139}
\curraddr{}
\email{drew@math.mit.edu}
\subjclass[2010]{Primary 11G05, 11G07; Secondary  11-04, 14H10}
\date{}
\dedicatory{}
\copyrightinfo{}

\begin{abstract}
We present a method for constructing optimized equations for the modular curve $X_1(N)$ using a local search algorithm on a suitably defined graph of birationally equivalent plane curves.
We then apply these equations over a finite field $\Fq$ to efficiently generate elliptic curves with nontrivial $N$-torsion by searching for affine points on $X_1(N)(\Fq)$,
and we give a fast method for generating curves with (or without) a point of order $4N$ using $X_1(2N)$.
\end{abstract}

\maketitle

\section{Introduction}

By Mazur's theorem \cite{Mazur:Torsion}, the order of a nontrivial torsion point on an elliptic curve over the rational numbers must belong to the set
$$\mathcal{T} = \{2,3,4,5,6,7,8,9,10,12\}.$$
Conversely, for each $N\in \mathcal{T}$ an infinite family of elliptic curves over $\mathbb{Q}$ containing a point of order~$N$ is exhibited by the parameterizations of
Kubert \cite{Kubert:Torsion}.\footnote{Kubert also addresses the torsion subgroups $\mathbb{Z}/2\mathbb{Z}\times \mathbb{Z}/2N\mathbb{Z}$ for $N=1,2,3,4$.}
Over a finite field $\mathbb{F}_q$, these parameterizations provide an efficient way to generate curves whose order $\#E(\Fq)$ is divisible by $N$.
This can significantly accelerate applications that search for an elliptic curve $E/\Fq$ with a desired property, such as curves with smooth order, as in the elliptic
curve factorization method \cite{Atkin:ECMCurves,Montgomery:SpeedingECM}, or curves with a particular endomorphism ring, as when computing Hilbert class polynomials with the Chinese Remainder Theorem \cite{Belding:HilbertClassPolynomial,EngeSutherland:CRTClassInvariants,Sutherland:HilbertClassPolynomials}.

To generate an elliptic curve $E/\Fq$ with nontrivial 7-torsion, for example, we simply pick an element $r\in \mathbb{F}_q$ and use $b=r^3-r^2$ and $c=r^2-r$ to define
\begin{equation}\label{equation:KubertCurve}
E(b,c):\quad y^2 + (1-c)xy - by = x^3-bx^2.
\end{equation}
Provided $E(b,c)$ is nonsingular, we obtain an elliptic curve on which the point $P=(0,0)$ has order 7.  By contrast, obtaining such a curve by trial and error is far more time consuming:
testing for 7-torsion typically involves finding the roots of a degree-24 polynomial (the 7-division polynomial), and several curves may need to be tested (approximately six, on average) .

Mazur's theorem limits us to $N\in \mathcal{T}$, but we may proceed further if we do not restrict ourselves to curves defined over $\mathbb{Q}$.
Reichert treats $N\in \{11,13,14,15,16,18\}$ over quadratic extensions of $\mathbb{Q}$ in \cite{Reichert:NontrivialTorsion} by finding explicit
equations for the modular curve $X_1(N)$, whose non-cuspidal points parametrize elliptic curves with a distinguished point of order~$N$.
We may be able to reduce a curve defined over a quadratic field $\mathbb{Q}[\sqrt{d}]$ to $\mathbb{F}_q$, but only when $d$ is a quadratic residue.

Alternatively, we can use an $\Fq$-rational point on $Y_1(N)$, the affine part of $X_1(N)$, to directly construct a curve $E(b,c)/\Fq$ containing a point of order~$N$, for any sufficiently large $q$ prime to $N$.
For $N\in \mathcal{T}$ the curve $X_1(N)$ has genus 0 and we may use Kubert's parameterizations, but
in general, we construct $E(b,c)$ from a point on $Y_1(N)(\Fq)$ via a birational map that depends on our choice of an explicit equation for $Y_1(N)$.

For example, to construct an elliptic curve $E/\Fq$ with a point of order~$13$, we start by finding an $\Fq$-rational point $(x,y)$ on the affine curve
\begin{equation}\label{equation:Y13}
y^2 + (x^3+x^2+1)y - x^2-x = 0,
\end{equation}
which may be done by choosing $x\in\mathbb{F}_q$ at random\footnote{For $N\in\{11,14,15\}$ the curve $X_1(N)$ has genus 1 and we may
obtain additional points more efficiently using the elliptic curve group operation, as discussed in Section \ref{section:FiniteFields}.} and attempting to solve the resulting quadratic equation for $y$ in $\mathbb{F}_q$.
We then apply the transformation
\begin{align*}
r &= 1 - xy,\\
s &= 1-xy / (y+1),
\end{align*}
set $c=s(r-1)$ and $b=cr$, and construct $E(b,c)$.
In the unlikely event that $E(b,c)$ is singular over $\Fq$, we simply look for a different point on the curve (\ref{equation:Y13}).

To apply this method we require a defining equation for $Y_1(N)$ and a corresponding birational map.
For fast computation we seek a plane curve $f(x,y)=0$ that minimizes the degree $d$ of one of its variables.
For $N\le 18$ one can derive these from the results of Kubert and Reichert.
Reichert's method can be applied to $N > 18$, but the resulting equation is quite large and of higher degree than necessary.
More compact defining equations are given by Yang \cite{Yang:ModularCurves} for $N\le 22$ and Baaziz \cite{Baaziz:ModularCurveX1N} for $N\le 51$, but these do not necessarily minimize~$d$, which critically impacts the efficiency of the construction above.

The minimal $d=d(N)$ corresponds to the \emph{gonality} of the curve $X_1(N)$, and is a topic of independent interest \cite{Jeon:TorsionQuarticNumberField,Jeon:TorsionCubicNumberField,Kamienny:UBC,Merel:UBC,Parent:TorsionNumberFields},
since it implies that there are infinitely many elliptic curves containing a point of order~$N$ defined over number fields of degree $d(N)$.
For $N>18$ only a few values of $d(N)$ are known,
but explicit equations provide upper bounds on $d(N)$, and can be used to define parametrized families of elliptic curves over number fields of a particular degree \cite{Jeon:ExplicitTorsionQuarticNumberField,Jeon:ExplicitTorsionCubicNumberField}.

Given a defining equation $f(x,y)=0$, one may attempt to reduce its complexity (degree, number of terms, and coefficient size) through a judiciously chosen sequence of rational transformations \cite{Baaziz:ModularCurveX1N,Reichert:NontrivialTorsion,Yang:ModularCurves}.
However this procedure tends to be rather \emph{ad hoc}, and becomes increasingly difficult as $N$ grows.
Here we treat this as a combinatorial optimization problem and apply standard search techniques to obtain a solution that is locally optimal under a metric we define.

Our results are not necessarily globally optimal, but for $N\le 22$ we are able to match known lower bounds for $d(N)$ \cite{Jeon:TorsionQuarticNumberField,Jeon:TorsionCubicNumberField,Kamienny:UBC}, and for $N\le 50$ we are able to match or improve the best known upper bounds for $d(N)$ given by explicit equations.
However, we do not achieve $d(24)=4$, implied by the (non-explicit) result in \cite{Jeon:TorsionQuarticNumberField}.
Optimized equations for $N\le 30$ appear in the appendix, and are available online for $N\le 50$.
The local search strategy we describe here has recently been augmented using simulated annealing~\cite{CadaySutherland:SimulatedAnnealing}, extending our results to $N\le 101$.

For odd $N$ we also show how to quickly generate $E/\mathbb{F}_q$ with a point of order~$4N$, or satisfying $\#E(\Fq)\equiv 2N\bmod 4N$, using our equations for $Y_1(2N)$.
These techniques play an important role in \cite{Sutherland:HilbertClassPolynomials}, and we expect they have other applications.

\section{Computing the ``raw form" of $X_1(N)$}\label{section:rawform}

Following \cite{Reichert:NontrivialTorsion}, we give a method to compute a defining equation $F_N(r,s)=0$ for $Y_1(N)$ that Reichert calls the ``raw form" of $X_1(N)$.\footnote{Reichert also uses auxiliary variables $m=s(1-r)/(1-s)$ and $t=(r-s)/(1-s)$.  We work directly with $r$ and $s$ throughout.}
The equation $E(b,c)$ is the Tate normal form of an elliptic curve (called a Kubert curve in \cite{Atkin:ECMCurves}).
Any elliptic curve with a point of order greater than $3$ can be put in this form \cite[\S V.5]{Knapp:EllipticCurves}.
The discriminant of $E(b,c)$ is
\begin{equation}\label{equation:discriminant}
\Delta(b,c) =b^3(16b^2 - 8bc^2 - 20bc+b + c(c-1)^3).
\end{equation}
To ensure that $E(b,c)$ is nonsingular we require $\Delta(b,c)\ne 0$, so we assume that $b\ne 0$.
Applying the group law for elliptic curves \cite[III.2.3]{Silverman:EllipticCurves1}, we double the point $P=(0,0)$ to obtain $2P=(b,bc)$, and for $n>1$ we may compute the point $(n+1)P=(x_{n+1},y_{n+1})$ in terms of $nP=(x_n,y_n)$ using
\begin{equation}\label{equation:grouplaw}
x_{n+1} = by_n/x_n^2, \quad y_{n+1} = b^2(x_n^2-y_n)/x_n^3.
\end{equation}
The inverse of the point $nP=(x_n,y_n)$ is
\begin{equation}\label{equation:inverse}
-nP = (x_n,b+(c-1)x_n-y_n).
\end{equation}
If $P$ is an $N$-torsion point and $m+n=N$, then we must have $mP=-nP$.  If $m\ne n$ this implies $x_m=x_n$, and if $m=n$ we have $2y_n = b+(c-1)x_n$.
When $x_m=x_n$, either $mP=nP$ or $mP=-nP$, and in the latter case $P$ is an $N$-torsion point.
If we choose $m=\left\lceil\frac{N+1}{2}\right\rceil$ and $n=\left\lfloor\frac{N-1}{2}\right\rfloor,$ we ensure $mP\ne nP$, obtaining a necessary and sufficient condition for $N$-torsion:
\begin{equation}\label{equation:torsion}
NP=0_E\quad\Longleftrightarrow\quad x_m=x_n,
\end{equation}
valid for all $N>3$, where $0_E$ denotes the neutral point on $E(b,c)$.
\smallskip

The first three multiples of $P$ are:
\begin{align*}
P &= (0,0),\\
2P &= (b,bc),\\
3P &= (c,b-c).
\end{align*}
None of these points is $0_E$, thus $P$ always has order greater than 3.
Applying (\ref{equation:torsion}), we see that $P$ is a point of order 4 if and only $c=0$, and $P$ is a point of order 5 if and only $b=c$.
For $N$ greater than $5$ we define:
\begin{align*}
r &= b/c,\qquad\qquad\qquad b = rs(r-1),\\
s &= c^2/(b-c),\qquad\quad\hspace{1pt} c = s(r-1),
\end{align*}
where $r\ne0,1$, and $s\ne 0$, since $b\ne 0$.
We may apply (\ref{equation:grouplaw}) to iteratively compute~$x_n$ as a rational function of $r$ and $s$; values for $1\le n\le 10$ are listed in Table \ref{table:Px}.

We now give an algorithm to compute $F_N(r,s)$ for $N > 5$, working in the polynomial ring $\Z[r,s]$.
We assume that the polynomials $F_M$ have already been computed, for $5 < M < N$, and that the rational function $x_n(r,s)$ is in the form $x_n=v_n/w_n$, where $v_n$ and $w_n$ are relatively prime polynomials in $\Z[r,s]$.

\begin{algorithm}
Given an integer $N>5$, compute $F_N(r,s)$ as follows:
\end{algorithm}
\renewcommand\labelenumi{\theenumi.}
\begin{enumerate}
\item
Compute $G_N = v_mw_n - v_nw_m$, where $m=\left\lceil\frac{N+1}{2}\right\rceil$ and $n=\left\lfloor\frac{N-1}{2}\right\rfloor$.
\item
Remove any powers of $r$, $s$,$(r-1)$, or $F_M$ that divide $G_N$,\\
for all $M>5$ properly dividing $N$.
\item
Make the remaining polynomial square-free and output the result as $F_N(r,s)$.
\end{enumerate}

\begin{example}
For $N=16$ we have $m=9$ and $n=7$.  After computing $x_9=v_9/w_9$ and $x_7=v_7/w_7$ (which may be found in Table \ref{table:Px}), step 1 of the algorithm constructs
the polynomial
\begin{align*}
G_{16}(r,s) =\medspace &s(r-1)(rs-2r+1)(rs^2-3rs+r+s+s^2) \cdot (r-s)^2\\
             &- rs(r-1)(s-1)(rs-2r+1) \cdot (r-s^2+s-1)^2.
\end{align*}
In step 2 the factors $s$, $r-1$, and $F_8(r,s)=rs-2r+1$ are removed, yielding the square-free polynomial
\begin{align*}
F_{16}(r,s) =\medspace &r^3s^2 - 4r^3s + 2r^3 + 3r^2s^2 + 2r^2s - 2r^2 - rs^5 + 4rs^4\\
					    &- 10rs^3 + 6rs^2 - 3rs + r + s^4,
\end{align*}
which appears in Table \ref{table:RawX1}.
\end{example}

For practical implementation it is convenient to first compute $x_n$ for all $n$ up to some bound $B$, and then use these to compute $F_N$ for $N\le 2B-1$.
The polynomials $F_N(r,s)$ for $N\le 101$ can be found at \url{http://math.mit.edu/~drew}.

\begin{proposition}
Let $F_N(r,s)$ be the polynomial output by Algorithm 1 on input $N > 5$.  Let $b=r_0s_0(r_0-1)$ and $c=s_0(r_0-1)$ with $\Delta(b,c)\ne 0$,
where $r_0$ and $s_0$ lie in a field whose characteristic does not divide $N$.
\smallskip

Then $P=(0,0)$ is a point of order $N$ on $E(b,c)$ if and only if $F_N(r_0,s_0)=0$.
\end{proposition}
\begin{proof}
We first note that $\Delta(b,c)\ne 0$ implies $b\ne 0$, hence $r_0\ne 0,1$ and $s_0\ne 0$.  We thus have $c\ne 0$ and $b\ne c$, so $P$ has order greater than 5.
If $w_m(r_0,s_0)=0$ in step 1 of the algorithm, then $P$ is an $m$-torsion point with $v_m(r_0,s_0)=b^2\ne0$, and we must have $w_n(r_0,s_0)\ne0$, since $m-n\le 2$ and $P$ has order greater than~$2$.
Similarly, if $w_n(r_0,s_0)=0$ then $v_n(r_0,s_0)$ and $w_m(r_0,s_0)$ are both nonzero.  It follows that $G_N(r_0,s_0)=0$ if and only if $x_m(r_0,s_0)=-x_n(r_0,s_0)$, equivalently,
if and only if $NP=0_E$.  This proves the proposition in that case that $N$ has no proper factors greater than 5, since $P$ has order greater than 5, and it proves the forward implication in every case.

For the reverse implication we proceed by induction on the number of proper factors of $N$ greater than~$5$, appealing to the moduli interpretation of $Y_1(N)$.
For $N>5$ (in fact, $N\ge 5$) the irreducible affine curve $Y_1(N)$ is a fine moduli space for equivalence classes of pairs $(E,P)$, where $P$ is a point of order~$N$ on the elliptic curve~$E$ (this is classical, see, e.g., \cite[\S 4]{Gross:TamenessCriterion} for a modern summary and references).
Each equivalence class is represented by a curve $E(b,c)$ with the point $P=(0,0)$ of order $N$, corresponding to a pair $(r_0,s_0)$.
There is a surjective rational map from the set $\{(r_0,s_0):G_N(r_0,s_0)=0, r_0\ne 0,1, s_0\ne 0\}$ to the union of the curves $Y_1(M)$ with $M>5$ dividing~$N$.
By the inductive hypothesis, the inverse image of $Y_1(M)$ under this map is the zero locus of $F_M$, for each $M>5$ properly dividing~$N$.  After these factors are removed in step 4,
the points on the remaining curve $F_N(r,s)=0$ must be mapped onto $Y_1(N)$.\footnote{This argument is essentially a formalization of the remarks in \cite[pp.~88--89]{Husemoller:EllipticCurves}.}
\end{proof}


\begin{table}
\begin{center}
\begin{tabular}{@{}l@{}}
\toprule
$x_1 = 0$\\
$x_2=rs(r-1)$\\
$x_3=s(r-1)$\\
$x_4=r(r-1)$\\
$x_5=rs(s-1)$\\
$x_6=s(r-1)(r-s)\enspace/\enspace(s-1)^2$\\
$x_7=rs(r-1)(s-1)(rs-2r+1)\enspace/\enspace(r-s)^2$\\
$x_8=r(r-1)(r-s)(r-s^2+s-1)\enspace/\enspace(rs-2r+1)^2$\\
$x_9=s(r-1)(rs-2r+1)(rs^2-3rs+r+s^2)\enspace/\enspace(r-s^2+s-1)^2$\\
$x_{10}=rs(r-s^2+s-1)(r^2-rs^3+3rs^2-4rs+s)\enspace/\enspace(rs^2-3rs+r+s^2)^2$\\
$x_{11}=rs(r-1)(s-1)(rs^2-3rs+r+s^2)(r^2s-3r^2+rs+3r-s^2-1)$\\
\bottomrule
\end{tabular}
\vspace{4pt}
\caption{$x$-coordinates of $nP$ for $n\le 10$.}\label{table:Px}
\end{center}
\end{table}

\section{Optimizing plane curve equations}\label{section:optimization}
We now consider how to optimize a given curve equation $F(r,s)=0$ for fast computation.
We have in mind the curves $F_N(r,s)=0$ of the previous section, but our method can be applied more generally.
We seek a birationally equivalent curve $f(x,y)=0$ that minimizes the degree of one of its variables (say~$y$).
Subject to this constraint, we would like to make $f$ monic in~$y$ and to minimize the degree in~$x$, the number of terms,
and the size of the coefficients (roughly in that order of priority).  One technique 
is to attempt to remove singularities through a carefully chosen sequence of translations and
inversions, see \cite{Reichert:NontrivialTorsion} for examples.  Here we take a more na\"{i}ve
approach that allows us to completely automate the process.

There are three basic types of transformations we shall use:
\renewcommand\labelenumi{(\theenumi)}
\begin{enumerate}
\item
Translate: \hspace{18pt}$x\rightsquigarrow x+a$\hspace{48pt} or $\quad y\rightsquigarrow y+a$.
\item
Invert: \hspace{32pt}$x\rightsquigarrow 1/x$\hspace{56pt} or $\quad y \rightsquigarrow 1/y$.
\item
Separate: \hspace{20pt}$x\rightsquigarrow 1/x,\enspace y\rightsquigarrow y/x\quad$ or $\quad x\rightsquigarrow x/y,\enspace y\rightsquigarrow 1/y$.
\end{enumerate}
These are clearly all invertible operations.  The third type combines an inversion
and a division, but we find it works well as an atomic unit.  In order to bound the number of
atomic operations, we let $a\in\{\pm 1\}$, giving a total of eight.

Consider the directed graph $G$ on the set $\mathcal{C}$ of plane curves that can be obtained from $F(r,s)=0$ by applying a finite sequence of the transformations above,
with edges labeled by the corresponding operation.  A path in $G$ defines a birational map (the composition of the operations labeling its edges), and any path can be
reversed to yield the inverse map.  Starting from the given curve $C_0$ defined by $F(r,s)=0$, we want to find a path to a ``better" curve $C$.  To make this precise, we associate to each
integer polynomial $f(x,y)$ a vector of nonnegative integers
$$v(f)=(d_y,m_y,d_x,d_{\rm tot},t,S),$$
whose components are defined by:
\begin{itemize}
\item
$d_x$ is the degree of $f$ in $x$ and $d_y$  is the degree of $f$ in $y$;
\item
$m_y$ is 0 if no term of $f$ is a multiple of $xy^{d_y}$ and 1 otherwise;
\item
$d_{\rm tot}$ is the total degree of $f$;
\item
$t$ is the number of terms in $f$;
\item
$S$ is the sum of the absolute values of the coefficients of $f$.
\end{itemize}
The component $m_y$ will be zero exactly when $f$ can be made monic as a polynomial in $y$.  We order the vectors $v(f)$ lexicographically, and to each $C\in\mathcal{C}$
assign the vector $v(C)=\min\{v(f(x,y)),v(f(y,x))\}$, where $f(x,y)=0$ defines $C$.  We compare curves by comparing their vectors, obtaining a prewellordering of $\mathcal{C}$.
In particular, any subset of $\mathcal{C}$ contains a (not necessarily unique) minimal element.

Given a plane curve $C_0$ we now give a simple algorithm to search the graph $G$ for a birationally equivalent curve $C_1$ that is locally optimal within a radius $R$.
Let $\rho(C,k)$ be the set of curves connected to $C$ by a path of length at most $k$ in $G$, and call $C$ $k$-optimal if $v(C) \le v(C')$ for all $C'\in\rho(C,k)$.
\begin{algorithm}
Given a plane curve $C_0$ defined by an integer polynomial and an integer $R$, output an $R$-optimal curve $C_1$ and a birational map $\varphi:C_1\to C_0$.
\end{algorithm}
\renewcommand\labelenumi{\theenumi.}
\begin{enumerate}
\item
Set $C\leftarrow C_0$, $k=1$, and let $\varphi$ be the identity map.
\item
While $k\le R$:
\renewcommand\labelenumii{\theenumii.}
\begin{enumerate}
\item
Determine a minimal element $C'$ of $\rho(C,k)$.
\item
If $v(C') < v(C)$, then set $\varphi\leftarrow \varphi\circ\phi(C',C)$, $C\leftarrow C'$, and $k\leftarrow 0$.
\item
Set $k\leftarrow k+1$.
\end{enumerate}
\item
Output $C_1=C$ and $\varphi$.
\end{enumerate}

We note that the output curve $C_1$ is birationally equivalent to $C_0$ via the map~$\varphi$, and it is clearly $R$-optimal.
Moreover, $v(C_1)\le v(C)$ for all $C\in\rho(C_0,R)$).

To enumerate the neighbors of the curve $C$ defined by $f(x,y)=0$, the algorithm applies each of the eight atomic operations.  The result of applying the birational map $\phi$ with inverse $\pi$ is computed by expanding $f(\pi_x(x,y),\pi_y(x,y))$ as a formal substitution of variables and clearing any denominators that result.  Thus the translation $x\rightsquigarrow x-1$ is obtained by expanding $f(x+1,y)$, and the inversion $x\rightsquigarrow 1/x$ effectively replaces $x^i$ in $f(x,y)$ with $x^{d_x-i}$.  To enumerate $\rho(C,k)$ we must consider up to $8^k$ possible sequences of operations (this number can be reduced by eliminating obviously redundant sequences), so the bound $R$ cannot be very large.  We have tested up to $R=10$, but find that $R=8$ suffices to obtain the results given here.  With $R=8$ the algorithm takes less than an hour to process the curve $F_N(r,s)=0$ for $N\le 50$ (on a single 2.8 GHz AMD Athlon core).

\begin{example}
Table \ref{table:ExampleSearch} illustrates the algorithm's execution on $F_{16}(r,s)=0$.
We start with $C=C_0$ defined by $f(x,y)=F_{16}(x,y)$.
Since $v(f(y,x)) < v(f(x,y)$, the algorithm determines that
$v(C)=v(f(y,x))=(3,1,5,6,13,40),$
indicating that $f(y,x)$ has degree 3 in $y$, is not monic in $y$, has
degree 5 in $x$, total degree $6$, 13 terms, and coefficients whose absolute values sum to 40.

\begin{table}
\begin{center}
\begin{tabular}{@{}lll@{}}
steps&$f(x,y)$&$v(C)$\\
\toprule
-&$ x^3y^2 - 4x^3y + 2x^3 + 3x^2y^2 + 2x^2y - 2x^2 - xy^5 + 4xy^4$&(3,1,5,6,13,40)\\
&$\quad-\enspace 10xy^3 + 6xy^2 - 3xy + x + y^4$&\\
5,8&$x^3 + x^2y^5 - 3x^2y^4 + 6x^2y^3 - 10x^2y^2 + 4x^2y - x^2 - 2xy^6$&(3,0,7,7,13,40)\\
&$\quad+\enspace 2xy^5 + 3xy^4 + 2y^7 - 4y^6 + y^5$&\\
1,3,8,6&$x^3 + x^2y^4 + 2x^2y^3 + 4x^2y^2 - 5x^2 - 2xy^4 - 8xy^3 - 13xy^2$&(3,0,4,6,13,68)\\
&$\quad+\enspace 8x + 2y^4 + 8y^3 + 10y^2 -4$&\\
1&$x^3 + x^2y^4 + 2x^2y^3 + 4x^2y^2 - 2x^2 - 4xy^3 - 5xy^2 + x$&(3,0,4,6,11,24)\\
&$\quad+\enspace  y^4 + 2y^3 + y^2$&\\
1&$x^3 + x^2y^4 + 2x^2y^3 + 4x^2y^2 + x^2 + 2xy^4 + 3xy^2 + 2y^4$&(3,0,4,6,8,16)\\
5,6&$2x^3 + 3x^2y^2 + 2x^2 + xy^4 + 4xy^2 + 2xy + x + y^4$&(3,0,4,5,8)\\
2,4,5,6,8&$ - x^3 + x^2y^3 - 4x^2y^2 + 4x^2y + 2x^2 + 3xy^2 - 6xy - x + 2y$&(3,0,3,5,9,24)\\
3&$- x^3 + x^2y^3 - x^2y^2 - x^2y + 3x^2 + 3xy^2 - 4x + 2y +2$&(3,3,0,5,9,18)\\
4,5,1,7&$- x^2y^2 - 2x^2y - x^2 + xy^3 + 2xy^2 + y^3 + 3y^2 + 2y$&(2,1,3,4,8,13)\\
4&$- x^2y^2 + xy^3 - xy^2 - xy + x + y^3 - y$&(2,1,3,4,7,7)\\
8&$- x^2 + xy^3 - xy^2 - xy + x - y^3 + y$&(2,0,3,4,7,7)\\
1&$- x^2 + xy^3 - xy^2 - xy - x - y^2$&(2,0,3,4,6,6)\\
\bottomrule
\end{tabular}
\vspace{4pt}
\caption{Optimization of $F_{16}(r,s)=0$.}\label{table:ExampleSearch}
\vspace{-12pt}
$$1: x\rightsquigarrow x-1,\qquad 2: x\rightsquigarrow x+1,\qquad 3: y\rightsquigarrow y-1, \qquad 4: y\rightsquigarrow y+1$$
$$5: x\rightsquigarrow 1/x,\quad\enspace 6: y\rightsquigarrow 1/y,\quad\enspace 7: x\rightsquigarrow 1/x,\enspace y\rightsquigarrow y/x, \quad\enspace 8: x\rightsquigarrow x/y,\enspace y\rightsquigarrow 1/y.$$
\end{center}
\end{table}

No curves within a distance $k=1$ are found that improve $v(C)$, but for $k=2$ a curve $C'$ is found
that is monic in $x$ (and also degree 3), which implies $v(C')<v(C)$.  $C'$ is a minimal curve in $\rho(C,2)$,
so $C$ is replaced by $C'$ and the map $\varphi$ becomes
$$x\rightsquigarrow y/x,\quad y\rightsquigarrow 1/y.$$
This reverses the sequence of steps 5,8 (as identified in the key to Table \ref{table:ExampleSearch}),
used to reach $C'$ from $C_0$ (so $\varphi$ maps points on $C'$ back to points on $C_0$).  The next improvement
occurs when $k=4$.  In this case reversing the path 1,3,8,6 from~$C$ to~$C'$ yields the path 6,8,4,2 from $C'$ back to $C$, and $\varphi$ becomes
$$x\rightsquigarrow (y+1)/(xy+1),\quad y\rightsquigarrow 1/(y+1).$$
The algorithm continues in this fashion, finding the sequence of curves listed in Table \ref{table:ExampleSearch}.
Eventually it is unable to find a better curve within the search radius~$R=8$ and terminates.
The resulting curve $C_1$ has minimal degree in $x$ rather than $y$, so we swap variables (and adjust signs) to obtain the optimized equation 
\begin{equation*}
f_{16}(x,y) = y^2+(x^3+x^2-x+1)y+x^2 = 0,
\end{equation*}
which appears in Table \ref{table:FinalX1}.  Corresponding changes to $\varphi$ yield the birational map
\begin{equation*}
r = 1 + (y + 1) / (xy+y^2),\qquad s= 1+(y+1)/(xy-y^2),
\end{equation*}
listed in Table \ref{table:FinalX1maps}, which carries points on $f_{16}(x,y)=0$ to points on $F_{16}(r,s)=0$.
\end{example}

Table \ref{table:RunStats} shows the improvement in the minimal degree $d(C_i)$ and the number of terms $t(C_i)$ obtained when the initial curve $C_0$ is transformed to the locally optimal curve $C_1$ output by the algorithm.  For comparison, we also list the genus of $X_1(N)$ (sequence $A029937$ in the OEIS \cite{Sloane:OEIS}, or see \cite[Thm.~1.1]{Jeon:ArithmeticModularCurves} for a general formula).

The search procedure described above can, in principal, be applied to any plane curve, but its effectiveness depends largely on finding singularities with small integer coordinates.
Empirically, this works well with the curves $F_N(r,s)=0$, but other applications may wish to modify the list of atomic operations to incorporate more general translations.
More sophisticated local search techniques such as simulated annealing \cite{CadaySutherland:SimulatedAnnealing} can significantly improve performance.

\section{Application to finite fields}\label{section:FiniteFields}

As described in the introduction, we may use our optimized equations for $Y_1(N)$ to efficiently generate elliptic curves containing a point of order~$N$ over a finite field~$\mathbb{F}_q$, where $q$ is prime to $N$.
Here we briefly address a few topics relevant to practical implementation.
We assume we have an optimized equation $f_N(x,y)=0$ for $Y_1(N)$ with $d_y\le d_x$, and consider how we may use $f_N(x,y)$ to efficiently generate a set of~$n$ elliptic curves over $\mathbb{F}_q$, each containing a point of order~$N$.

Except for a small set of points (those leading to singular curves and those for which $\varphi$ is undefined), there is a one-to-one correspondence between $\Fq$-rational points on $f_N(x,y)=0$ and elliptic curves $E/\Fq$ in Tate normal form on which the point $P=(0,0)$ has order~$N$ (see Section \ref{section:rawform}). It follows that we can obtain a uniform distribution of pairs $(E,P)$ from a uniformly distributed sample of the $\Fq$-rational points on $f_N(x,y)=0$.  We should note that this distribution is not uniform on $E$; there is a pair $(E,P)$ for each point $P$ of order $N$ in $E(\Fq)[N]$, and the number of such $P$ may vary with $E$.  The distribution on $E$ can be precisely determined, see~\cite{Gekeler:ECGroupStructures} for the case where $q$ is prime.

When $d_y > 2$ it is not a trivial task to uniformly sample of the zero locus of $f_N$ in an efficient manner.  It is impractical to test random solutions to $f_N(x,y)=0$, so instead we pick $x_i\in\mathbb{F}_q$ at random and compute the $\Fq$-rational roots $y_{ij}$ of the polynomial $h_{i}(y)=f_N(x_i,y)$ (if any).  For each such $y_{ij}$ we include the point $(x_i,y_{ij})$ in our set of $n$ points.  Assuming $n \gg d_y$ this gives us an approximately uniform distribution (if we used only one root of $h_i$ this would not be true), but the points obtained are not all independent.  In practice this does not typically pose a problem.  At most $d$ points share a common $x$ value, and after mapping the points back to $F_N(r,s)=0$ and constructing $E(b,c)$ it is difficult to discern any relationship among the curves.\footnote{We could obtain a uniform independent distribution by using at most one root of each $h_{i}$, provided that we discard it with a certain probability depending on the number of roots $h_{i}$ has, but this is not a very efficient solution.}  With this approach we expect to compute the roots of $n$ polynomials $h_i(y)$, on average, in order to obtain $n$ points on $f_N(x,y)=0$.

When $X_1(N)$ has genus 1, the projective closure of the curve $f_N(x,y)=0$ is an elliptic curve, and we may use a more efficient approach: select a point at random and compute multiples of it via the group operation.  We can generate $n$ random multiples using a total of $O(\log{q}+n\log{q}/\log\log{q})$ group operations via standard multi-exponentiation techniques \cite{Yao:MultiExponentiation}, or we can compute multiples in an arithmetic sequence using just $n+O(\log{q})$ group operations.  The latter approach does not generate independent points, but it is highly efficient: only $O(1)$ operations in $\mathbb{F}_q$ are required per point, assuming $n \gg \log{q}$.  During this computation it is convenient to work with an elliptic curve in short Weierstrass form.  These are provided in Table \ref{table:genus1X1}, along with the corresponding maps back to $F_N(r,s)=0$.

\begin{table}
\begin{center}
\begin{tabular}{@{}rl@{}}
$N$&Weierstrass equation and birational map to $F_N(r,s)=0$\\
\toprule
11&$y^2=x^3-432x+8208$\\
&$r=(y+108)/216$\\
&$s=1+(y-108)/(6x+72)$\\
\midrule
14&$y^2=x^3-675x+13662$\\
&$r=1+(108x - 36y + 3564)/(3x^2-xy-342x+75y+999)$\\
&$s=(6x-234) / (9x-y-135)$\\
\midrule
15&$y^2=x^3-27x+8694$\\
&$t=(6x-90)(18x+6y-918)$\\
&$r=1 - t/(x^2y - 189x^2 + 42xy - 4050x - 3y^2 + 441y - 1701)$\\
&$s=1- t/(x^2y - 81x^2 + 6xy - 3402x - 3y^2 + 981y - 35721)$\\
\bottomrule
\end{tabular}
\vspace{4pt}
\caption{Short Weierstrass models for the genus 1 cases.}\label{table:genus1X1}
\end{center}
\end{table}

Having generated a set of $n$ points on $f_N(x,y)=0$, we apply the appropriate birational map to obtain points on $F_N(r,s)=0$.  When doing so, we invert the denominators in parallel, via the usual Montgomery trick \cite[Alg. 11.15]{Cohen:HECHECC}.  We then compute $(b,c)$ pairs, using $c=s(r-1)$ and $b=rc$.  In a field of characteristic not~$2$ or $3$, we may convert the curve $E(b,c)$ to short Weierstrass form
\begin{equation*}
y^2=x^3+Ax+B.
\end{equation*}
Let $a=c-1$ and $e=a^2-4b$.  We may apply the admissible change of variables
\begin{equation}\label{equation:KubertToSWS}
u=(x-3e)/36,\qquad v=(y+108(au+b))/216,
\end{equation}
to the curve $E(b,c)$ defined by $v^2 + (1-c)uv - bv = u^3-bu^2$.  After clearing denominators we obtain the isomorphic curve $y^2=x^3+Ax+B$, where
$$A = 27(24ab-e^2),\qquad\qquad B=54(e^3-36abe+216b^2),$$
on which $(3e, -108b)$ is a point of order~$N$.

At some point during the process described above, we need to check that the discriminant $\Delta$ of each curve obtained is nonzero.  This is most efficiently done at the end using $\Delta = -4A^3-27B^2$.  This may result in fewer than $n$ curves being generated, but we can always obtain more points on $f_N(x,y)=0$ (assuming $q\gg m$).

We remark that when $X_1(N)$ has genus 0, parameterizations that additionally provide a point of infinite order over $\mathbb{Q}$ can be found in \cite{Atkin:ECMCurves}.

\section{Prescribing 4-torsion}

For odd $N$, we can use our equations for $Y_1(2N)$ to generate elliptic curves which contain a point of order $4N$ over $\mathbb{F}_q$ in a manner that may be more efficient than using $Y_1(4N)$.
We can also efficiently generate curves which contain a point of order $2N$ but do \emph{not} contain a point of order $4N$.
These results rely on efficiently computing the 4-torsion of an elliptic curve using a known a point of order 2, which we obtain from the point $P=(0,0)$ of order $2N$.
For odd $N$, a curve with a point of order~$N$ has a point of order $4N$ if and only if it has a point of order $4$.

In fact, we only need the $x$-coordinate of $NP$, which can be computed as described in Section \ref{section:rawform}.  It will be convenient to work with the short Weierstrass form, so we assume that the point $NP$ has been translated via (\ref{equation:KubertToSWS}) to the 2-torsion point $\beta=(x_0,0)$ on the curve $E$ defined by $y^2=f(x)=x^3+Ax+B$.

Our strategy is to use the value $x_0$ to determine whether $E(\Fq)$ contains a point of order 4 or not.  In the best case this requires only a single test for quadratic residuacity in $\mathbb{F}_q$, and even in the worst case, a square root and two tests for quadratic residuacity suffice.  If the result is not as desired, we discard $E$ and test another curve with a point of order $2N$.  On average we expect to test two curves.  This is typically faster than using $Y_1(4N)$ or using $Y_1(N)$ and computing 4-torsion without a known point of order 2.

We rely on the following lemma, which is a standard result. Lacking a suitable reference, we give a short proof here.

\begin{lemma}\label{lemma:halving}
If $\alpha=(u,v)$ and $\beta=(x_0,0)$ are points on a nonsingular elliptic curve $E$ defined by $y^2=f(x)=x^3+Ax+B$ over a field of characteristic not $2$ then
$$2\alpha = \beta\qquad \Longleftrightarrow\qquad (u-x_0)^2 = f'(x_0),$$
where $f'(x)=3x^2+A$.
\end{lemma}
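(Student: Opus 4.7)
The plan is to reduce the equivalence to a single polynomial identity obtained by unwinding the explicit doubling formula. Since $\beta$ is the unique point of $E$ with $x$-coordinate $x_0$, the condition $2\alpha = \beta$ is equivalent to the $x$-coordinate of $2\alpha$ being equal to $x_0$. Assuming $v \ne 0$, the standard doubling formula gives this $x$-coordinate as $\lambda^2 - 2u$ with $\lambda = f'(u)/(2v)$, and replacing $v^2$ by $f(u)$ turns the condition into
\[
f'(u)^2 \;=\; 4\,f(u)\,(2u + x_0).
\]

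Next I would use $f(x_0)=0$ to pin down $B = -x_0^3 - Ax_0$ and factor $f(x) = (x - x_0)(x^2 + x_0 x + x_0^2 + A)$. Substituting this factorization into the displayed equation reduces the lemma to the polynomial identity
\[
(3u^2 + A)^2 \;-\; 4(u - x_0)(2u + x_0)\bigl(u^2 + x_0 u + x_0^2 + A\bigr) \;=\; \bigl((u - x_0)^2 - (3x_0^2 + A)\bigr)^2
\]
in $\mathbb{Z}[u, x_0, A]$. Both sides are quartic in $u$, so verifying the identity reduces to matching five coefficients, each a polynomial in $x_0$ and $A$; I expect this to be the main (and only) computational hurdle, but it is pure bookkeeping rather than ingenuity.

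Granting the identity, the conclusion for $v \ne 0$ is immediate: the left side vanishes exactly when $2\alpha = \beta$, while the right side vanishes exactly when $(u - x_0)^2 = f'(x_0)$. Finally I would dispatch the degenerate case $v = 0$ separately. Then $\alpha$ is 2-torsion, so $2\alpha = O \ne \beta$; on the other hand, applying the same identity with $f(u) = 0$ shows that $(u - x_0)^2 = f'(x_0)$ would force $f'(u) = 0$ as well, contradicting the nonsingularity of $E$. Hence both sides of the claimed equivalence are false in this case, completing the proof.
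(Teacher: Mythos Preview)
Your proposal is correct and follows essentially the same approach as the paper: both use the duplication formula, eliminate $B$ via $f(x_0)=0$, and recognize the resulting quartic in $u$ as the perfect square $\bigl((u-x_0)^2 - f'(x_0)\bigr)^2$, then handle the degenerate case $f(u)=0$ via nonsingularity. The only cosmetic difference is that the paper reaches the perfect square by substituting $u=z+x_0$, whereas you factor $f(x)=(x-x_0)(x^2+x_0x+x_0^2+A)$ and state the resulting identity directly; the polynomial identities are literally the same.
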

\begin{proof}
If $2\alpha=\beta$ then the duplication formula for elliptic curves \cite[p. 59]{Silverman:EllipticCurves1} implies
$$x_0=\frac{u^4-2Au^2-8Bu+A^2}{4(u^3+Au+B)}.$$
Therefore $u$ must satisfy
$$u^4-4x_0u^3-2Au^2-(4Ax_0+8B)u-4Bx_0+A^2 = 0.$$
Since $\beta=(x_0,0)\in E$, we have $x_0^3+Ax_0+B=0$.  Substituting for $B$ yields
$$u^4-4x_0u^3-2Au^2+(8x_0^3+4Ax_0)u+4x_0^4+4Ax_0^2+A^2.$$
We now set $u=z+x_0$ and rewrite this as
$$(z^2-(3x_0^2+A))^2 = 0.$$
Therefore
$$(u-x_0)^2 = 3x_0^2+A = f'(x_0),$$
as desired.  Reversing the argument yields the converse, provided $f(u)\ne 0$. But if $u$ is a root of $f$, then
one can show that $(u-x_0)^2=f'(x_0)$ implies $D(f)=0$, contradicting the fact that $E$ is nonsingular.
\end{proof}

There may be 1 or 3 points of order 2 in $E(\Fq)$.  The $x$-coordinates of the other two (if they exist) are the roots $x_1$ and $x_2$ of $f(x)/(x-x_0)$, which we can determine with the quadratic formula.  We now give our main result for treating 4-torsion.

\renewcommand\labelenumi{(\theenumi)}
\begin{proposition}\label{prop:4torsion}
Let $(x_0,0)$ be a point of order $2$ on a nonsingular elliptic curve $E$ defined by $y^2=f(x)=x^3+Ax+B$ over the field $\mathbb{F}_q$, with quadratic character $\chi$.  Let $n$ be the number of roots of $f(x)$ in $\Fq$, and for $n=3$, let $x_1$ and $x_2$ denote the other two roots.
\vspace{4pt}

For $q\equiv 3\bmod 4$:
\begin{enumerate}
\item
If $\chi(f'(x_0))=1$ then $E(\Fq)$ contains a point of order $4$.
\item
Otherwise, $E(\Fq)$ contains a point of order $4$ if and only if $n=3$ and $\chi(f'(x_1))=1$.
\end{enumerate}

For $q\equiv 1\bmod 4$:
\begin{enumerate}
\item
If $n=1$ then $E(\Fq)$ contains a point of order $4$ if and only if $\chi(f'(x_0))=1$.
\item
Otherwise, if $\chi(f'(x_0))=1$ $($resp., $\chi(f'(x_0))=-1)$ then $E(\Fq)$ contains a point of order~$4$ if and only if $\chi(x_0-x_1)=1$ $($resp., $\chi(x_1-x_2)=1)$.
\end{enumerate}
\end{proposition}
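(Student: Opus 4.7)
My plan is to determine, for each 2-torsion point $\beta_i = (x_i,0)$ of $E$ over $\Fq$, whether $\beta_i \in 2E(\Fq)$; the proposition then follows because $E(\Fq)$ contains a point of order $4$ iff some such halving exists.  By Lemma~\ref{lemma:halving}, a point $(u,v) \in E(\Fqbar)$ with $2(u,v) = \beta_i$ exists iff $u = x_i \pm s_i$ with $s_i^2 = f'(x_i)$, and it is $\Fq$-rational iff $\chi(f'(x_i)) = 1$ and $f(u_\pm) \in (\Fq^*)^2$ for some choice of sign.

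The central calculation is to factor $f(x) = (x-x_0)(x-x_1)(x-x_2)$ in $\Fqbar[x]$ and use $x_0+x_1+x_2=0$ together with $f'(x_0) = (x_0-x_1)(x_0-x_2)$ to obtain
\[
f(x_0 \pm s_0) \;=\; s_0^2(3x_0 \pm 2s_0) \;=\; s_0^2\bigl(\sqrt{x_0-x_1} \pm \sqrt{x_0-x_2}\bigr)^2,
\]
with square roots chosen so that their product is $s_0$.  In particular $f(u_+)f(u_-) = s_0^4(x_1-x_2)^2$.  If $n=1$, then $(x_1-x_2)^2$ is the discriminant of the irreducible quadratic $f(x)/(x-x_0)$, hence a non-square in $\Fq^*$, so exactly one of $f(u_\pm)$ is a square whenever $\chi(f'(x_0))=1$, and $\beta_0$ halves iff $\chi(f'(x_0))=1$.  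If $n=3$, then $(x_1-x_2)^2$ is a square, so $f(u_+)$ and $f(u_-)$ share the same character; a Frobenius argument shows that $(3x_0+2s_0)$ is a square in $\Fq$ iff both $x_0-x_1$ and $x_0-x_2$ are squares (when both are non-squares, Frobenius negates each of their square roots in $\mathbb{F}_{q^2}$, hence negates their sum, placing $\sqrt{x_0-x_1}+\sqrt{x_0-x_2}$ in $\mathbb{F}_{q^2}\setminus\Fq$).  Thus for $n=3$, $\beta_i$ halves iff $\chi(x_i-x_j) = \chi(x_i-x_k) = 1$, where $\{j,k\} = \{0,1,2\}\setminus\{i\}$.

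The proposition then reduces to sign bookkeeping.  Set $a = \chi(x_0-x_1)$, $b = \chi(x_0-x_2)$, $c = \chi(x_1-x_2)$ when $n=3$, and use $\chi(x_j-x_i) = \chi(-1)\chi(x_i-x_j)$ to translate the halving conditions into constraints on $a,b,c$.  For $q \equiv 1 \pmod 4$ they become $\beta_0 \colon a=b=1$, $\beta_1 \colon a=c=1$, $\beta_2 \colon b=c=1$, so $E$ has a point of order $4$ iff at least two of $a,b,c$ equal $+1$; using $ab=\chi(f'(x_0))$, this gives part~(2), and part~(1) is the $n=1$ statement above.  For $q \equiv 3 \pmod 4$ the conditions become $\beta_0 \colon a=b=1$, $\beta_1 \colon a=-1,\ c=1$, $\beta_2 \colon b=-1,\ c=-1$: if $ab=1$ then either $\beta_0$ halves or exactly one of $\beta_1,\beta_2$ does (depending on $c$), proving part~(1); if $ab=-1$, a short check matches the halvings of $\beta_1$ and $\beta_2$ with the single condition $-ac = \chi(f'(x_1)) = 1$, and $n=1$ forces no halving, proving part~(2).

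The main obstacle will be the Frobenius step in the $n=3$, $a=b=-1$ sub-case: even though $f'(x_0)$ is a square, the halving of $\beta_0$ fails because the relevant $\mathbb{F}_{q^2}$-sums of square roots do not descend to $\Fq$.  One must then identify which of $\beta_1,\beta_2$ supplies the missing point of order $4$ in order to obtain the uniform statement of part~(1) for $q \equiv 3 \pmod 4$; everything else is a straightforward enumeration over the eight sign patterns of $(a,b,c)$.
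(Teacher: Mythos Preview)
Your argument is correct and takes a genuinely different route from the paper's.  The paper never computes $f(x_0\pm s_0)$ explicitly; instead it argues via the quadratic twist $\tilde{E}$, using $\#E+\#\tilde{E}=2(q+1)$ to transfer divisibility by $4$ and (when $q\equiv 3\bmod 4$) by $8$ between $E$ and $\tilde{E}$, and for the $q\equiv 1\bmod 4$, $n=3$ case it simply invokes the classical 2-descent criterion (Knapp, Thm.~4.2) that $E$ has a point of order $4$ iff at least two of $x_0-x_1$, $x_1-x_2$, $x_2-x_0$ are squares.  It also needs an auxiliary parity lemma, proved via the discriminant and Stickelberger--Swan, asserting $\chi(-1)\chi(f'(x_0))\chi(f'(x_1))\chi(f'(x_2))=1$.

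Your approach is more uniform and more self-contained: the factorization $f(x_0\pm s_0)=s_0^2(\sqrt{x_0-x_1}\pm\sqrt{x_0-x_2})^2$ together with the Frobenius step re-derives the 2-descent criterion from scratch, and your sign bookkeeping with $(a,b,c)$ makes both the parity lemma and the external reference to Knapp unnecessary.  The paper's twist argument, on the other hand, is shorter in the $q\equiv 3\bmod 4$ case because the divisibility transfer immediately gives part~(1) without ever deciding \emph{which} $\beta_i$ halves, whereas you must run the $a=b=-1$ case explicitly and then observe that exactly one of $\beta_1,\beta_2$ picks up the slack depending on~$c$.  Both proofs ultimately rest on Lemma~\ref{lemma:halving}; the difference is whether one passes through group orders and an outside citation, or through an explicit square-root identity and Galois.
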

\begin{proof}
Note that $f(x_i)=0$ implies $f'(x_i)\ne 0$, since $E$ is nonsingular, hence $\chi(f'(x_i))=\pm 1$.
Let $\tilde{E}$ denote the quadratic twist of $E$ over $\Fq$.  By Lemma \ref{lemma:halving}, each root $x_i$ of $f(x)$ for which $\chi(f'(x_i))=1$ yields 4 points of order 4 (two pairs of inverses), either all in $E(\Fq)$, all in $\tilde{E}(\Fq)$, or split 2-2 between them.  Recall that $\#E(\Fq)=q+1-t$ and $\#\tilde{E}(\Fq)=q+1+t$, where $t$ is the trace of Frobenius, so $4$ divides $\#E(\Fq)$ if and only if $4$ divides $\#\tilde{E}(\Fq)$, and for $q\equiv 3\bmod 4$, $8$ divides $\#E(\Fq)$ if and only if $8$ divides $\#\tilde{E}(\Fq)$.  

We first consider $q\equiv 3 \bmod 4$.

Suppose $\chi(f'(x_0))=1$.  If $n=1$ then $E(\Fq)$ and $\tilde{E}(\Fq)$ both have two points of order $4$.  If $n=3$ then at least one of $\#E(\Fq)$ or $\tilde{E}(\Fq)$ is divisible by $8$, but then they both are, hence $E(\Fq)$ (and $\tilde{E}(\Fq)$) must contain a point of order $4$, since the $2$-rank of an elliptic curve over $\Fq$ is at most $2$.

Suppose $\chi(f'(x_0))= -1$.  If $n=1$ then $E(\Fq)$ cannot contain a point of order~$4$, so assume $n=3$.  By Lemma \ref{lemma:parity}, for $q\equiv 3\bmod 4$ we have $\chi(f'(x_1))=\chi(f'(x_2))$, and if their common value is $-1$ then $E(\Fq)$ cannot have a point of order 4.  If instead it is $1$, then at least one of $\#E(\Fq)$ or $\#\tilde{E}(\Fq)$ is divisible by $8$, hence both are, and as above, $E(\Fq)$ must contain a point of order $4$.

We now consider $q\equiv 1\bmod 4$.

If $n=1$ then $E(\Fq)$ contains a point of order $4$ if and only if $\chi(f'(x_0))=1$, as above,
so assume $n=3$.  It follows from Theorem 4.2 of~\cite{Knapp:EllipticCurves} that $E(\Fq)$ has a point of order 4 if and only if at least two of $x_0-x_1$, $x_1-x_2$, and $x_2-x_0$ are squares in $\Fq$.  We have $f'(x_0)=(x_0-x_1)(x_0-x_2)$, so if $\chi(f'(x_0))=1$ then it suffices to check $\chi(x_0-x_1)$, and if $\chi(f'(x_0))=-1$ then it suffices to check $\chi(x_1-x_2)$.
\end{proof}

\begin{lemma}\label{lemma:parity}
Let $f(x)$ be a monic cubic polynomial with distinct roots $x_0$,$x_1$,$x_2$ in~$\Fq$, with $q$ odd.  We have
$$\chi(-1)\chi(f'(x_0))\chi(f'(x_1))\chi(f'(x_2)) = 1.$$
In particular, the number of squares in the set $\{f'(x_0), f'(x_1), f'(x_2)\}$ is odd when $q\equiv 1 \bmod 4$ and even when $q\equiv 3\bmod 4$.
\end{lemma}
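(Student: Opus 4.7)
My plan is to prove the identity by a direct computation using the factorization of $f$ and recognizing the discriminant as a square.

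Since $f$ is monic cubic with distinct roots $x_0, x_1, x_2 \in \mathbb{F}_q$, we may factor $f(x) = (x-x_0)(x-x_1)(x-x_2)$. Differentiating and evaluating at $x_i$, all terms of the product rule except one vanish, yielding
$$f'(x_i) = \prod_{j\ne i}(x_i - x_j).$$
So the first step is to write down the three values $f'(x_0)=(x_0-x_1)(x_0-x_2)$, $f'(x_1)=(x_1-x_0)(x_1-x_2)$, $f'(x_2)=(x_2-x_0)(x_2-x_1)$ explicitly.

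Next I would form the product $f'(x_0)f'(x_1)f'(x_2)$ and pair up factors of the form $(x_i-x_j)(x_j-x_i) = -(x_i-x_j)^2$. There are three such pairs, so
$$f'(x_0)f'(x_1)f'(x_2) = -\bigl((x_0-x_1)(x_0-x_2)(x_1-x_2)\bigr)^2 = -D,$$
where $D$ is a square in $\mathbb{F}_q^\times$ (the discriminant of $f$, nonzero because the roots are distinct). Applying the quadratic character $\chi$, the square $D$ contributes $1$, and we obtain
$$\chi(f'(x_0))\chi(f'(x_1))\chi(f'(x_2)) = \chi(-1).$$
Multiplying both sides by $\chi(-1)$ and using $\chi(-1)^2 = 1$ gives the claimed identity.

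For the final ``in particular'' statement, I would translate the identity into a parity count: since each $\chi(f'(x_i)) \in \{\pm 1\}$, the product of the three equals $\chi(-1)$, which determines the parity of the number of $-1$'s (equivalently, of squares) among $f'(x_0), f'(x_1), f'(x_2)$ according to whether $\chi(-1) = 1$ or $-1$, i.e.\ according to $q \bmod 4$. There is no real obstacle here; the entire argument is a short manipulation, and the only thing to be careful about is the sign arising from the three transpositions when simplifying the product $\prod_i f'(x_i)$.
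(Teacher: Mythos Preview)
Your proof is correct and follows the same overall strategy as the paper: both show that $f'(x_0)f'(x_1)f'(x_2) = -D(f)$ and then use that $D(f)$ is a nonzero square in $\Fq$. The difference lies in how each step is justified. The paper obtains the identity via the resultant formula $D(f)=(-1)^{n(n-1)/2}R(f,f')$ and $R(f,f')=\prod_i f'(x_i)$, and then invokes the Stickelberger--Swan theorem to conclude that $D(f)$ is a square (since $f$ splits into three linear factors). You instead factor $f$ explicitly, write each $f'(x_i)$ as a product of root differences, and pair them up to see directly that the product is $-\bigl((x_0-x_1)(x_0-x_2)(x_1-x_2)\bigr)^2$. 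Because all three roots lie in $\Fq$ by hypothesis, this Vandermonde product is an element of $\Fq$, so its square is visibly a square in $\Fq^\times$; no appeal to Stickelberger--Swan is needed. Your argument is thus more elementary and self-contained in this setting, while the paper's formulation via the resultant and Stickelberger--Swan would generalize more readily to cases where the roots are not assumed to lie in $\Fq$.
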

\begin{proof}
Recall that for a monic $f$ of degree $n=3$, the discriminant of $f$ is given by
$$D(f)=(-1)^{n(n-1)/2}R(f,f')=-R(f,f'),$$
where $R(f,f')$ is the resultant.  Since $f$ is monic, we have $R(f,f')\prod f'(x_i)$, thus
$$D(f)=-f'(x_0)f'(x_1)f'(x_2).$$
The roots of $f$ are distinct, so $D(f)\ne 0$.  By the Stickelberger-Swan Theorem (Corollary 1 in \cite{Swan:FactoringPolynomials}), $D(f)$ must be a square in $\mathbb{F}_q$, since $f$ is degree 3 and has 3 irreducible factors.  The lemma then follows, since $\chi(D(f))=1$.
\end{proof}

We note that in Proposition \ref{prop:4torsion}, when (1) fails to hold it is quite likely that $E$ has trivial 4-torsion.
On average, this probability is about 90\% (it can be computed precisely via \cite{Gekeler:ECGroupStructures,Howe:EllipticCurveOrders}).
As a practical optimization, when seeking a point of order $4N$, if condition (1) fails we may simply discard the curve and test another.  When $q\equiv 3 \bmod 4$ this reduces to a test for quadratic residuacity in $\mathbb{F}_q$, and we expect two tests of curves generated with $Y_1(2N)$ will suffice to produce a curve with a point of order $4N$.
\medskip

As a final remark, we note the following generalization to our approach to prescribing 4-torsion.
In \cite{Miret:EC2Sylow}, Miret et al. give an algorithm to determine a point of maximal 2-power order on an elliptic curve over a finite field.
Their algorithm is based on successive halving, and can be accelerated if given as input a point of order $2^i$ for some $i\ge 1$.
Thus we can use their algorithm to efficiently search for an elliptic curve with a point of order $2^kN$ using curves generated with $Y_1(2^iN)$, where $1 \le i < k$.
The optimal choice of $i$ depends on $k$ and the equations for $Y_1(2^iN)$, but will often be $i=1$.
This approach is especially effective in cases where the 2-Sylow subgroup of $E(\Fq)$ is cyclic, which happens more often than not.

\bibliographystyle{amsplain}
\providecommand{\bysame}{\leavevmode\hbox to3em{\hrulefill}\thinspace}
\providecommand{\MR}{\relax\ifhmode\unskip\space\fi MR }
\providecommand{\MRhref}[2]{%
  \href{http://www.ams.org/mathscinet-getitem?mr=#1}{#2}
}
\providecommand{\href}[2]{#2}

\section{Appendix}

\noindent
For reasons of space, most of the tables that follow are restricted to $N\le 30$, and we omit some data.
Complete results for $N\le 50$ are available online at \url{http://math.mit.edu/~drew}.

\begin{table}
\begin{center}
\begin{tabular}{@{}ll@{}}
$N$&$F_N(r,s)$\\
\toprule
6&$s-1$\\
7&$r-s$\\
8&$rs-2r+1$\\
9&$r-s^2+s-1$\\
10&$rs^2 - 3rs + r + s^2$\\
11&$r^2-rs^3+3rs^2-4rs+s$\\
12&$r^2s - 3r^2 + rs + 3r - s^2 - 1$\\
13&$r^3 - r^2s^4 + 5r^2s^3 - 9r^2s^2 + 4r^2s - 2r^2 - rs^3 + 6rs^2 - 3rs + r - s^3$\\
14&$r^2s^3 - 5r^2s^2 + 6r^2s - r^2 + rs^4 - 3rs^3 + 6rs^2 - 7rs + r + s$\\
15&$r^3 - r^2s^5 + 7r^2s^4 - 18r^2s^3 + 19r^2s^2 - 10r^2s - rs^5+ 4rs^4 - 5rs^2 + 5rs - s^5$\\
&$+\enspace s^4 - s^3 + s^2 - s$\\
16&$r^3s^2 - 4r^3s + 2r^3 + 3r^2s^2 + 2r^2s - 2r^2 - rs^5 + 4rs^4 - 10rs^3 + 6rs^2 - 3rs$\\
&$+\enspace r + s^4$\\
17&$r^5 - r^4s^6 + 9r^4s^5 - 31r^4s^4 + 50r^4s^3 - 39r^4s^2 + 10r^4s - 3r^4 - r^3s^6 + 3r^3s^5$\\
&$+\enspace 12r^3s^4 - 46r^3s^3 + 54r^3s^2 - 15r^3s + 3r^3 - r^2s^6 - 3r^2s^5+ 9r^2s^4 + r^2s^3 $\\
&$-\enspace21r^2s^2 + 6r^2s - r^2 + rs^7 - 3rs^6 + 6rs^5 - 10rs^4 + 11rs^3 - s^3$\\
18&$r^4s^3 - 6r^4s^2 + 9r^4s - r^4 + r^3s^5 - 7r^3s^4 + 20r^3s^3 - 19r^3s^2 - 8r^3s + r^3 + r^2s^4$\\
&$-\enspace 11r^2s^3 + 28r^2s^2 + rs^4 - 5rs^3 - 8rs^2 + s^4 + s^3 + s^2$\\
19&$r^6 - r^5s^7 + 11r^5s^6 - 48r^5s^5 + 105r^5s^4 - 121r^5s^3 + 69r^5s^2 - 20r^5s - r^5$\\
&$-\enspace 2r^4s^7 + 12r^4s^6 - 9r^4s^5 - 60r^4s^4 + 144r^4s^3 - 105r^4s^2 + 35r^4s - 3r^3s^7$\\
&$+\enspace 3r^3s^6 + 21r^3s^5 - 30r^3s^4 - 41r^3s^3 + 51r^3s^2 - 21r^3s + r^2s^9 - 6r^2s^8 + 21r^2s^7$\\
&$-\enspace 50r^2s^6+ 66r^2s^5 - 31r^2s^4 + 25r^2s^3 - 18r^2s^2 + 7r^2s + 3rs^6 - 15rs^5 + 10rs^4$\\
&$-\enspace 6rs^3+ 3rs^2 - rs + s^6$\\
20&$r^5s^2 - 5r^5s + 5r^5 + 5r^4s^2 - 10r^4 - r^3s^7 + 9r^3s^6 - 35r^3s^5 + 70r^3s^4 - 85r^3s^3$\\
&$+\enspace 51r^3s^2 - 9r^3s + 10r^3 + 10r^2s^5 - 35r^2s^4 + 60r^2s^3 - 50r^2s^2 + 10r^2s - 5r^2$\\
&$-\enspace rs^7+ 3rs^6 - 6rs^5 + 10rs^4 - 15rs^3 + 16rs^2 - 3rs + r - s^2$\\
21&$r^6 - r^5s^8 + 13r^5s^7 - 69r^5s^6 + 192r^5s^5 - 300r^5s^4 + 261r^5s^3 - 119r^5s^2$\\
&$+\enspace 21r^5s - 4r^5 - r^4s^9 + 10r^4s^8 - 45r^4s^7 + 141r^4s^6 - 345r^4s^5 + 576r^4s^4$\\
&$-\enspace 551r^4s^3 + 273r^4s^2 - 49r^4s + 6r^4 - r^3s^10 + 10r^3s^9 - 51r^3s^8 + 159r^3s^7$\\
&$-\enspace 316r^3s^6 + 450r^3s^5 - 551r^3s^4 + 489r^3s^3 - 247r^3s^2 + 42r^3s - 4r^3 + 3r^2s^8$\\
&$-\enspace 31r^2s^7 + 109r^2s^6 - 172r^2s^5 + 203r^2s^4 - 181r^2s^3 + 97r^2s^2 - 14r^2s + r^2$\\
&$+\enspace 2rs^8 - 11rs^7 + 8rs^6 + 2rs^5 - 13rs^4 + 19rs^3 - 14rs^2 + rs + s^8 - s^7 + s^6 - s^5$\\
&$+\enspace s^4 - s^3 + s^2$\\
22&$r^6s^5 - 9r^6s^4 + 28r^6s^3 - 35r^6s^2 + 15r^6s - r^6 + r^5s^8 - 12r^5s^7 + 59r^5s^6$\\
&$-\enspace 148r^5s^5 + 205r^5s^4 - 186r^5s^3 + 133r^5s^2 - 49r^5s + 3r^5 + r^4s^8 - 6r^4s^7$\\
&$-\enspace 8r^4s^6 + 118r^4s^5 - 260r^4s^4 + 249r^4s^3 - 164r^4s^2 + 58r^4s - 3r^4 + r^3s^8$\\
&$-\enspace 30r^3s^6 + 34r^3s^5 + 70r^3s^4 - 106r^3s^3 + 80r^3s^2 - 30r^3s + r^3 + r^2s^8$\\
&$+\enspace 6r^2s^7 - 7r^2s^6 - 25r^2s^5 + 5r^2s^4 + 14r^2s^3 - 16r^2s^2 + 7r^2s - rs^9 + 3rs^8$\\
&$-\enspace 8rs^7 + 21rs^6 - 15rs^5 + 10rs^4 - 6rs^3 + 3rs^2 - rs - s^7$\\
23&$r^9 - r^8s^9 + 15r^8s^8 - 94r^8s^7 + 319r^8s^6 - 636r^8s^5 + 756r^8s^4 - 520r^8s^3$\\
&$+\enspace 189r^8s^2 - 35r^8s - 2r^8 - 4r^7s^9 + 39r^7s^8 - 120r^7s^7 + 28r^7s^6 + 597r^7s^5$\\
&$-\enspace 1341r^7s^4 + 1256r^7s^3 - 525r^7s^2 + 105r^7s + r^7 - 10r^6s^9 + 45r^6s^8 + 24r^6s^7$\\
&$-\enspace 357r^6s^6 + 324r^6s^5 + 570r^6s^4 - 1130r^6s^3 + 576r^6s^2 - 126r^6s + r^5s^{13} - 14r^5s^{12}$\\
&$+\enspace 93r^5s^{11} - 370r^5s^{10} + 970r^5s^9 - 1827r^5s^8 + 2553r^5s^7 - 2296r^5s^6 + 1095r^5s^5$\\
&$-\enspace 480r^5s^4 + 686r^5s^3 - 369r^5s^2 + 84r^5s + r^4s^{12} - 21r^4s^{11} + 165r^4s^{10} - 650r^4s^9$\\
&$+\enspace 1530r^4s^8 - 2562r^4s^7 + 2957r^4s^6 - 2046r^4s^5 + 780r^4s^4 - 415r^4s^3 + 171r^4s^2$\\
&$-\enspace 36r^4s + r^3s^{12} - 15r^3s^{11} + 66r^3s^{10} - 84r^3s^9 - 45r^3s^8 + 402r^3s^7$\\
&$-\enspace 833r^3s^6 + 837r^3s^5 - 351r^3s^4 + 145r^3s^3 - 48r^3s^2 + 9r^3s + r^2s^{12} - 9r^2s^{11}$\\
&$+\enspace 13r^2s^{10} - r^2s^9 - 24r^2s^8 + 28r^2s^7 + 42r^2s^6 - 126r^2s^5 + 56r^2s^4 - 21r^2s^3$\\
&$+\enspace 6r^2s^2 - r^2s + rs^{12} - 3rs^{11} + 6rs^{10} -10rs^9 + 15rs^8 - 21rs^7 + 21rs^6 - s^6$\\
\bottomrule
\end{tabular}
\vspace{4pt}
\caption{Raw equations $F_N(r,s)=0$.}\label{table:RawX1}
\end{center}
\end{table}

\begin{table}
\begin{center}
\begin{tabular}{@{}rrrrrrrr@{}}
$N$&$\enspace g$&$\qquad d(C_0)$&$d(C_1)$&$\qquad t(C_0)$&$t(C_1)$&$\qquad k_{\max}$&$\ell(C_0,C_1)$\\
\toprule
10&0&4&\bf{0}&1&1&2&10\\
11&1&2&\bf{2}&5&4&2&4\\
12&0&2&\bf{0}&6&1&2&13\\
13&2&3&\bf{2}&11&6&2&13\\
14&1&2&\bf{2}&10&4&2&11\\
15&1&3&\bf{2}&15&5&3&18\\
16&2&3&\bf{2}&13&6&5&23\\
17&5&5&\bf{4}&28&12&5&23\\
18&2&4&\bf{2}&19&6&5&24\\
19&7&6&\bf{5}&39&18&4&23\\
20&3&5&\bf{3}&28&6&4&23\\
21&5&6&\bf{4}&55&11&4&18\\
22&6&6&\bf{4}&50&17&7&40\\
23&12&9&\bf{7}&87&38&7&25\\
24&5&6&\bf{5}&41&20&6&25\\
25&12&10&\bf{8}&114&46&6&20\\
26&10&8&\bf{7}&82&27&5&32\\
27&13&11&\bf{8}&135&52&4&19\\
28&10&10&\bf{7}&115&30&2&16\\
29&22&14&\bf{11}&214&88&8&32\\
30&9&10&\bf{8}&109&46&7&23\\
31&26&16&\bf{13}&279&124&6&23\\
32&17&13&\bf{10}&190&78&7&19\\
33&21&16&\bf{12}&319&109&6&29\\
34&21&14&\bf{11}&235&88&7&22\\
35&25&19&\bf{15}&438&142&4&19\\
36&17&14&\bf{11}&224&94&7&23\\
37&40&23&\bf{18}&582&225&4&19\\
38&28&18&\bf{14}&383&140&6&27\\
39&33&22&\bf{17}&586&212&4&20\\
40&25&19&\bf{15}&412&171&5&22\\
41&51&28&\bf{22}&870&336&8&49\\
42&25&20&\bf{15}&442&165&8&27\\
43&57&31&\bf{24}&1065&408&6&23\\
44&36&24&\bf{19}&654&208&3&21\\
45&41&29&\bf{23}&960&368&4&19\\
46&45&26&\bf{21}&791&285&6&23\\
47&70&37&\bf{29}&1526&1768&6&33\\
48&37&26&\bf{19}&773&257&7&23\\
49&69&39&\bf{31}&1791&900&6&37\\
50&48&30&\bf{23}&1040&391&8&42\\
\bottomrule
\end{tabular}
\vspace{8pt}
\caption{Optimization results of Algorithm~2.}\label{table:RunStats}
\end{center}
\begin{minipage}{1.0\linewidth}
\small
The curves $C_0$ and $C_1$ are the raw and optimized forms of $Y_1(N)$, respectively, and $g$ is the genus of $X_1(N)$.  The column $d(C_i)$ lists the minimum of the degree of $C_i$ in~$x$ or~$y$, and $t(C_i)$ is the number of terms.
The column $\ell(C_0,C_1)$ gives the length of the path traveled by the algorithm of Section \ref{section:optimization} to reach $C_1$ from $C_0$ (typically not a shortest path), and $k_{\max}\le R=8$ is the maximum search radius used prior to reaching $C_1$.
\end{minipage}
\end{table}

\begin{table}
\begin{center}
\begin{tabular}{@{}ll@{}}
$N$&$f_N(x,y)$\\
\toprule
11&$y^2 + (x^2 + 1)y + x$\\
13&$y^2 + (x^3 + x^2 + 1)y - x^2 - x$\\
14&$y^2 + (x^2 + x)y + x$\\
15&$y^2 + (x^2 + x + 1)y + x^2$\\
16&$y^2 + (x^3 + x^2 - x + 1)y + x^2$\\
17&$y^4 + (x^3 + x^2 - x + 2)y^3 + (x^3 - 3x + 1)y^2 - (x^4 + 2x)y + x^3 + x^2$\\
18&$y^2 + (x^3 - 2x^2 + 3x + 1)y + 2x$\\
19&$y^5 - (x^2 + 2)y^4 - (2x^3 + 2x^2 + 2x - 1)y^3 + (x^5 + 3x^4 + 7x^3 + 6x^2 + 2x)y^2$\\
&\hspace{12pt}$- (x^5 + 2x^4 + 4x^3 + 3x^2)y + x^3 + x^2$\\
20&$y^3 + (x^2 + 3)y^2 + (x^3 + 4)y + 2$\\
21&$y^4 + (3x^2 + 1)y^3 + (x^5 + x^4 + 2x^2 + 2x)y^2 + (2x^4 + x^3 + x)y + x^3$\\
22&$y^4 + (x^3 + 2x^2 + x + 2)y^3 + (x^5 + x^4 + 2x^3 + 2x^2 + 1)y^2$\\
&\hspace{12pt}$+ (x^5 - x^4 - 2x^3 - x^2 - x)y - x^4 - x^3$\\
23&$y^7 + (x^5 - x^4 + x^3 + 4x^2 + 3)y^6 + (x^7 + 3x^5 + x^4 + 5x^3 + 7x^2 - 4x + 3)y^5$\\
&\hspace{12pt}$+ (2x^7 + 3x^5 - x^4 - 2x^3 - x^2 - 8x + 1)y^4$\\
&\hspace{12pt}$+ (x^7 - 4x^6 - 5x^5 - 6x^4 - 6x^3 - 2x^2 - 3x)y^3$\\
&\hspace{12pt}$- (3x^6 - 5x^4 - 3x^3 - 3x^2 - 2x)y^2 + (3x^5 + 4x^4 + x)y - x^2(x+1)^2$\\
24&$y^5 + (x^4 + 4x^3 + 3x^2 - x - 2)y^4 - (2x^4 + 8x^3 + 7x^2 - 1)y^3$\\
&\hspace{12pt}$- (2x^5 + 4x^4 - 3x^3 - 5x^2 - x)y^2 + (2x^5 + 5x^4 + 2x^3)y + x^6 + x^5$\\
25&$y^8 + (4x^2 + 7x - 4)y^7 - (x^5 - x^4 - 14x^3 - 4x^2 + 24x - 6)y^6$\\
&\hspace{12pt}$- (x^7 + 4x^6 - 3x^5 - 18x^4 + 15x^3 + 33x^2 - 30x + 4)y^5$\\
&\hspace{12pt}$- (x^8 + 2x^7 - 8x^6 - 14x^5 + 24x^4 + 17x^3 - 41x^2 + 16x - 1)y^4$\\
&\hspace{12pt}$+ (x^8 + 6x^7 + 3x^6 - 20x^5 - 3x^4 + 28x^3 - 19x^2 + 3x)y^3$\\
&\hspace{12pt}$- (3x^7 + 9x^6 - 3x^5 - 13x^4 + 11x^3 - 3x^2)y^2 + (3x^6 + 4x^5 - 4x^4 + x^3)y - x^5$\\
26&$y^6 + (3x^2 + 4x - 2)y^5 + (3x^4 + 10x^3 - 9x + 1)y^4$\\
&\hspace{12pt}$+ (x^6 + 7x^5 + 8x^4 - 14x^3 - 11x^2 + 6x)y^3$\\
&\hspace{12pt}$+ (x^7 + 4x^6 - x^5 - 13x^4 + 2x^3 + 10x^2 - x)y^2$\\
&\hspace{12pt}$- (x^6 - 7x^4 - 4x^3 + 2x^2)y - x^4 - x^3$\\
27&$y^8 + (3x^2 + 6x - 3)y^7 - (3x^5 - 18x^3 - 9x^2 + 18x - 3)y^6$\\
&\hspace{12pt}$- (x^8 + 8x^7 + 13x^6 - 21x^5 - 48x^4 + 20x^3 + 42x^2 - 18x + 1)y^5$\\
&\hspace{12pt}$- (x^{10} + 6x^9 + 12x^8 - 14x^7 - 72x^6 - 27x^5 + 93x^4 + 33x^3 - 45x^2 + 6x)y^4$\\
&\hspace{12pt}$+ (x^{10} + 11x^9 + 40x^8 + 36x^7 - 69x^6 - 105x^5 + 33x^4 + 54x^3 - 15x^2)y^3$\\
&\hspace{12pt}$- (4x^9 + 30x^8 + 63x^7 + 10x^6 - 69x^5 - 24x^4 + 19x^3)y^2$\\
&\hspace{12pt}$+ (6x^8 + 27x^7 + 27x^6 - 6x^5 - 12x^4)y - 3x^7 - 6x^6 - 3x^5$\\
28&$y^7 + 3xy^6 + (x^5 + 3x^4 + 5x^3 + 9x^2 + 2x)y^5 - (2x^5 - 6x^3 + 2x^2 + 2x)y^4$\\
&\hspace{12pt}$+ (3x^6 + 16x^5 + 18x^4 - 2x^2)y^3 + (x^7 - 2x^6 - 20x^5 - 28x^4 - 12x^3 - 2x^2)y^2$\\
&\hspace{12pt}$- (2x^7 + 3x^6 - 5x^5 - 10x^4 - 5x^3 - x^2)y + x^7 + 2x^6 + x^5$\\
29&$y^{11} + (2x^3 + 5x^2 + 5x - 3)y^{10}+(x^6 + 8x^5 + 18x^4 + 11x^3 - 5x^2 - 12x + \cdots$\\
30&$y^8 - (2x^3 + 4x^2 + x + 5)y^7 + (x^6 + 4x^5 + 6x^4 + 9x^3 + 14x^2 + 10)y^6$\\
&\hspace{12pt}$- (x^7 + 4x^6 + 9x^5 + 10x^4 + 4x^3 + 15x^2 - 10x + 10)y^5$\\
&\hspace{12pt}$+ (x^8 + 4x^7 + 4x^6 - 5x^4 - 20x^3 + 5x^2 - 20x + 5)y^4$\\
&\hspace{12pt}$+ (3x^7 + 11x^6 + 15x^5 + 9x^4 + 18x^3 - 9x^2 + 14x - 1)y^3$\\
&\hspace{12pt}$+ (3x^6 + 9x^5 + 14x^4 + 2x^3 + 13x^2 - 3x)y^2 + (x^5 + x^4 + 4x^3 - 3x^2)y - x^3$\\
\bottomrule
\end{tabular}
\vspace{4pt}
\caption{Optimized equations $f_N(x,y)=0$.}\label{table:FinalX1}
\end{center}
\begin{minipage}{1.0\linewidth}
\small
$f_N(x,y)=x$ for $N\in\{6,7,8,9,10,12\}$.  The polynomial $f_{29}$ is not displayed in full.
Complete polynomials for $N\le 50$ are available at \url{http://math.mit.edu/~drew}.
\end{minipage}
\normalsize
\end{table}

\begin{table}
\begin{center}
\begin{tabular}{@{}rll@{}}
$N$&$\varphi$\\
\toprule
6&$r=x,$\hspace{91pt}$s=1$\\
7&$r=x,$\hspace{91pt}$s=x$\\
8&$r=1/(2-x),$\hspace{56pt}$s=x$\\
9&$r=x^2-x+1,$\hspace{51pt}$s=x$\\
10&$r=-x^2/(x^2-3x+1),$\hspace{15pt}$s=x$\\
11&$r=1+xy,$\hspace{68pt}$s=1-x$\\
12&$r=(2x^2-2x+1) / x,$\hspace{22pt}$s=(3x^2-3x+1) / x^2$\\
13&$r=1-xy,$\hspace{67pt}$s=1-xy/(y + 1)$\\
14&$r=1-(x+y)/((y+1)(x+y+1)),$\hspace{42pt}$s=(1-x)/(y+1)$\\
15&$r=1+(xy+y^2) / (x^3+x^2y+x^2),$\hspace{50pt}$s=1+y/(x^2+x)$\\
16&$r=(x^2 - xy + y^2 + y)/(x^2 + x - y - 1),$\hspace{25pt}$s=(x - y)/(x + 1)$\\
17&$r=(x^2 + x - y)/(x^2 + xy + x - y^2 - y),$\hspace{24pt}$ s=(x + 1)/(x + y + 1)$\\
18&$r=(x^2 - xy - 3x + 1)/((x-1)^2(xy+1)),$\\
     &$s=x^2 - 2x - y)/(x^2 - xy - 3x - y^2 - 2y)$\\
19&$r=1+x(x+y)(y-1)/((x+1)(x^2-xy+2x-y^2+y)),$\\
    &$s = 1+x(y-1)/((x+1)(x-y+1))$\\
20&$r=1+(x^3+xy+x)/((x-1)^2(x^2-x+y+1)),$\\
    &$s=1+(x^2+y+1)/((x-1)(x^2-x+y+2))$\\
21&$r=1+(y^2+y)(xy+y+1)/((xy+1)(xy-y^2+1)),$\\
     &$s=1+(y^2+y)/(xy+1)$\\
22&$r=(x^2y + x^2 + xy + y)/(x^3 + 2x^2 + y),$\hspace{26pt}$s=(xy + y)/(x^2 + y)$\\
23&$r=(x^2 + x + y + 1)/(x^2 - xy),$\hspace{63pt}$s=(x + y + 1)/x$\\
24&$r=(x^2 + x - y + 1)/(x^2 + xy - y^2 + y),$\hspace{23pt}$s=(x + 1)/(x + y)$\\
25&$r=(x^2 + xy + y^2 - y)/(x^2 + x + y - 1),$\hspace{14pt}$s=(x + y)/(x + 1)$\\
26&$r=(x^3y + 3x^2y - x^2 + xy^2) /((x+1)(x^2y+x^2+3xy+y^2)),$\\
   &$s=(xy - x)/(xy + y)$\\
27&$r=(-x^3 - x^2 - x - y)/(x^2y + xy - x - y),$\hspace{9pt}$s=(-x^2 - x - y)/(xy - x - y)$\\
28&$r=1+(xy+y) / ((y-1)(xy-x+2y-1)),$\\
    &$s=1-(xy+y) / ((y-1)(x-y+1))$\\
29&$r=(-x^3 - x^2 - x - y)/(x^2y + xy - x - y)$\\
    &$s=1-(x^2 + xy)/(xy - x - y)$\\
30&$r=(x^2y + x + y)/(x^2y - xy + x)$,\\
&$s=(x^2y + xy + x + y)/(x^2y + x)$\\
\bottomrule
\end{tabular}
\vspace{4pt}
\caption{Birational maps from $f_N(x,y)=0$ to $F_N(r,s)=0$.}\label{table:FinalX1maps}
\end{center}
\end{table}

\end{document}